\newtheorem{theorem}{Theorem}[section]
\newtheorem{definition}[theorem]{Definition}
\newtheorem{proposition}[theorem]{Proposition}
\newtheorem{conjecture}[theorem]{Conjecture}
\newtheorem{corollary}[theorem]{Corollary}
\newtheorem{remark}[theorem]{Remark}
\newtheorem{lemma}[theorem]{Lemma}
\newtheorem{claim}{Claim}[section]
\newenvironment{claimproof}[1]{{\it\noindent{Proof.}}\space#1}{\footnotesize \hfill \ensuremath{(\square)} \medskip}
\newcommand{\ggrt}{\gamma_{gr}^t}
\newcommand{\gt}{\gamma_{t}}
\begin{document}

\title{On graphs with equal total domination and Grundy total domination number}

\author{
Tanja Gologranc$^{a,b}$ \and Marko Jakovac$^{a,b}$ \and Tim Kos$^{b}$  \and Tilen Marc$^{b, c}$}

\maketitle

\begin{center}
$^a$ Faculty of Natural Sciences and Mathematics, University of Maribor, Slovenia\\
\medskip

$^b$ Institute of Mathematics, Physics and Mechanics, Ljubljana, Slovenia\\
\medskip

$^c$ Faculty of Mathematics and Physics, Ljubljana, Slovenia\\
\medskip

\end{center}

\begin{abstract}
A sequence $(v_1,\ldots ,v_k)$ of vertices in a graph $G$ without isolated vertices is called a total dominating sequence if every vertex $v_i$ in the sequence totally dominates at least one vertex that was not totally dominated by $\{v_1,\ldots , v_{i-1}\}$ and $\{v_1,\ldots ,v_k\}$ is a total dominating set of $G$. The length of a shortest such sequence is the total domination number of G ($\gt(G)$), while the length of a longest such sequence is the Grundy total domination number of $G$ ($\ggrt(G)$). In this paper we study graphs with equal total and Grundy total domination number. We characterize bipartite graphs with both total and Grundy total domination number equal to 4, and show that there is no connected chordal graph $G$ with $\gt(G)=\ggrt(G)=4$. The main result of the paper is a characterization of regular bipartite graphs with $\gt(G)=\ggrt(G)=6$ proved by establishing a surprising correspondence between existence of such graphs and a classical but still open problem of the existence of certain finite projective planes.
\end{abstract}

\noindent
{\bf Keywords:} total domination number, Grundy total domination number, bipartite graphs, orthogonal array, finite projective planes \\

\noindent
{\bf AMS Subj.\ Class.\ (2010)}: 05C69, 05B15

%%%%%%%%%%%%%%%%%%%%%%%%%%%%%%%%%%%%%%%%%%%%%%%%%%%%%%%%%%%%%%%%%%%%%
%%%%%%%%%%%%%%%%%%%%%%%%%%%%%%%%%%%%%%%%%%%%%%%%%%%%%%%%%%%%%%%%%%%%%
\section{Introduction}
%%%%%%%%%%%%%%%%%%%%%%%%%%%%%%%%%%%%%%%%%%%%%%%%%%%%%%%%%%%%%%%%%%%%%
%%%%%%%%%%%%%%%%%%%%%%%%%%%%%%%%%%%%%%%%%%%%%%%%%%%%%%%%%%%%%%%%%%%%%

The total domination was introduced in 1980~\cite{cdh80}, and has been extensively studied since. The interest in this combinatorial property is motivated by it simplicity, natural applications, and results connecting it to many other combinatorial parameters, see survey monograph~\cite{HY}. A set $S$ of vertices of a graph $G=(V,E)$ is a {\it total dominating set}, if every vertex of $G$ has a neighbor in $S$. The cardinality of a minimum total dominating set in $G$ is called the {\it total domination number} of $G$ and is denoted by $\gt(G)$.

In~\cite{bhr2016}, an invariant that strives for the biggest total dominating set of a graph, was introduced. Let $G$ be a graph without isolated vertices and denote by $N(v)$ the \emph{(open) neighborhood} of a vertex $v$, i.e.\ the set of all the neighbors of $v$. Call the sequence $S=(v_1,\ldots , v_k)$ of distinct vertices of $G$ a \emph{legal} sequence, if for any $i \in \{2,\ldots , k\}$, vertex $v_i$ totally dominates at least one vertex from $V(G) \setminus \bigcup_{j<i}N(v_j)$, i.e.\
$$N(v_i) \setminus \bigcup_{j<i} N(v_j) \neq \emptyset.$$
Given a sequence $S=(v_1,\ldots,v_k)$ of $G$, denote  by  $\hat{S}$ the corresponding set of vertices $\{v_1,\ldots , v_k\}$. If $S$ is a legal sequence and $\hat{S}$ is a total dominating set of $G$, then $S$ is a {\it total dominating sequence} of $G$. The maximum length of a total dominating sequence in $G$ is called the {\it Grundy total domination number} of $G$ and it is denoted by $\ggrt(G)$. The corresponding sequence is a {\it Grundy total dominating sequence} of $G$.

This recently introduced parameter has received many attention after its introduction followed up by many interesting results. Similarly as the decision version of the total domination problem asking if $\gt(G)$ of a graph $G$ is smaller than some constant, also the decision version of the Grundy total domination number is NP-complete. In fact the problem is already hard in bipartite graphs~\cite{bhr2016} and also in split graphs~\cite{BKN}. On the other hand, efficient algorithms for computing the Grundy total domination number are known for trees, bipartite distance-hereditary graphs, and $P_4$-tidy graphs~\cite{BKN}.

Many bounds for the Grundy total domination number are known for various families of graphs, such as connected regular graphs and graph products~\cite{BBG-2017, bhr2016}. A simplest upper bound for a general graph $G$ is $\ggrt(G)=|V(G)|$ and graphs obtaining this bound were characterized in~\cite{bhr2016}. On the other hand, a natural lower bound for $\ggrt(G)$ is the total domination number of $G$. In this paper we focus on the extremal graphs obtaining this bound, continuing the work from~\cite{bhr2016} where it was proved that $\gt(G)=\ggrt(G)=2$ holds exactly for the complete multipartite graphs and that there is no graph with $\gt(G)=\ggrt(G)=3$. We focus on the sequential cases, showing that the case $\gt(G)=\ggrt(G)=4$ leads to simple extremal graphs, while the case $\gt(G)=\ggrt(G)=6$ is far more complicated with a strong connection with projective planes, latin squares, etc.

A game version of total domination number $\gamma_{tg}$ was defined in \cite{HKR} and it follows from its definition that  $\gt(G)\leq \gamma_{tg}(G) \leq \ggrt(G)$. Since in this paper we work with graphs for which the latter is an equality, we are also dealing with extremal graphs for the game total domination number. Finding such graphs is an open problem which is already interesting when restricted to special graph classes. For the class of trees the problem was solved in~\cite{HR}. Similar extremal problems are investigated also for game version of the domination number $\gamma_g$, where the trees with the same domination and game domination number were characterized in~\cite{game-version}.

The paper is organized as follows. In Section~\ref{sec:prelim} we present relevant results about multigraphs setting up the stage for Grundy total domination sequences in bipartite graphs. We continue in Section~\ref{sec:4} with a characterization of bipartite graphs with both total domination number and Grundy total domination number 4. We also prove that there is no chordal graph with  both total domination number and Grundy total domination number being equal to 4. Finally in Section~\ref{sec:regBib6} we show that a classification of graphs $G$ with $\gt(G)=\ggrt(G)=k \geq 6$ is a much harder problem connected to other classical open problems. We characterize regular bipartite graphs with $\gt(G)=\ggrt(G)= 6$ and prove that the existence of those graphs is closely related to the existence of finite projective plains or equivalently to the existence of perfect family of pairwise orthogonal Latin squares.

%%%%%%%%%%%%%%%%%%%%%%%%%%%%%%%%%%%%%%%%%%%%%%%%%%%%%%%%%%%%%%%%%%%%%
%%%%%%%%%%%%%%%%%%%%%%%%%%%%%%%%%%%%%%%%%%%%%%%%%%%%%%%%%%%%%%%%%%%%%
\section{Bipartite graphs as multigraphs} \label{sec:prelim}
%%%%%%%%%%%%%%%%%%%%%%%%%%%%%%%%%%%%%%%%%%%%%%%%%%%%%%%%%%%%%%%%%%%%%
%%%%%%%%%%%%%%%%%%%%%%%%%%%%%%%%%%%%%%%%%%%%%%%%%%%%%%%%%%%%%%%%%%%%
%The graphs considered in this paper are simple and without isolated vertices. The \emph{neighborhood} of a vertex $v$ in $G=(V,E)$ is the set of vertices adjacent to $v$, i.e.\ $N(v)=\{u \in V(G);~uv \in E(G)\}$, and the \emph{closed neighborhood} of $v$ is $N[v]=N(v) \cup \{v\}$.

In this section we explain the connections between dominating sequences in bipartite graphs and similar concepts in hypergraphs. Most of the section is dedicated to presenting the hypergraph terminology and results from~\cite{bhr2016} that have a direct corollary on Grundy total domination in bipartite graph. The result is asserted in Corollary \ref{col:bipartiteEqual}, where a reader wanting to avoid technical details can skip to. Note that an important implication of this section is the nonexistence of bipartite graphs with odd and equal total and Grundy total domination numbers.

Let ${\mathcal{H}} = (X, E)$ be a hypergraph with no isolated vertices. An {\it edge cover} of $\mathcal{H}$ is a set of hyperedges from $E$ that cover all vertices of $X$. The {\it covering number} of $\mathcal{H}$, $\rho({\mathcal{H}})$, is the minimum number of hyperedges in an edge cover of ${\mathcal{H}}$. A {\it legal (hyperedge) sequence} of ${\mathcal{H}}$, ${\mathcal{C}}=(C_1,\ldots , C_k)$, is a sequence of hyperedges from $E$ such that $C_i \setminus \bigcup_{j < i}C_j \neq \emptyset $ for any $i \in \{1,\ldots , k\}$. If ${\mathcal{C}}=(C_1,\ldots , C_k)$ is a legal sequence and $\{C_1,\ldots , C_k\}$ is an edge cover of ${\mathcal{H}}$, then ${\mathcal{C}}$ is an {\it edge covering sequence} of ${\mathcal{H}}$. The maximum length $k$ of an edge covering sequence of ${\mathcal{H}}$ is called the {\it Grundy covering number} of ${\mathcal{H}}$, $\rho_{\textrm{gr}}({\mathcal{H}})$. A {\it legal transversal sequence} is a sequence $S=(v_1,\ldots , v_t)$ of vertices from $X$ such that for each $i$ there exists an edge $E_i \in E$ such that $v_i \in E_i$ and $v_j \notin E_i$ for all $j < i$. The longest possible legal transversal sequence in ${\mathcal{H}}$ is {\it Grundy transversal sequence} and its length is the {\it Grundy transversal number} of ${\mathcal{H}}$, $\tau_{\textrm{gr}}({\mathcal{H}})$.

The {\it incidence graph} of a hypergraph ${\mathcal{H}}=(X,E)$ is the bipartite graph $G=(V,E)$, whose vertex set can be partitioned into independent sets $\tilde{X}$ and $\tilde{E}$ that correspond to the set of vertices $X$ and hyperedges, respectively. A vertex $\tilde{x} \in \tilde{X}$ is adjacent to $\tilde{E_1} \in \tilde{E}$ if and only if $x \in E_1$. It follows from definitions that the Grundy covering number of a hypergraph ${\mathcal{H}}$ coincides with the maximum length of a legal sequence in $\tilde{E}$ that totally dominates $\tilde{X}$ in the incidence graph of ${\mathcal{H}}$. On the other hand, it was proved in~\cite{bhr2016} that the Grundy transversal number of a hypergraph ${\mathcal{H}}$ coincides with the maximum length of a legal sequence in $\tilde{X}$ that totally dominates $\tilde{E}$ in the incidence graph of ${\mathcal{H}}$. This means that the Grundy total domination number of the incidence graph of ${\mathcal{H}}$ coincides with $\tau_{\textrm{gr}}({\mathcal{H}})+\rho_{\textrm{gr}}({\mathcal{H}})$. Even more, in~\cite{bhr2016} the following results were proved.
\begin{proposition}\cite[Proposition 8.3]{bhr2016}
The Grundy transversal number of an arbitrary hypergraph ${\mathcal{H}}$ equals the Grundy covering number of ${\mathcal{H}}$.
\end{proposition}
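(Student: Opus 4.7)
The plan is to establish both inequalities $\rho_{\textrm{gr}}({\mathcal{H}}) \leq \tau_{\textrm{gr}}({\mathcal{H}})$ and $\tau_{\textrm{gr}}({\mathcal{H}}) \leq \rho_{\textrm{gr}}({\mathcal{H}})$ by a symmetric reversal-plus-extension argument. For the first one I would start from a longest legal edge covering sequence $(C_1, \ldots, C_k)$ with $k = \rho_{\textrm{gr}}({\mathcal{H}})$ and, for each $i$, fix a private witness $v_i \in C_i \setminus \bigcup_{j<i} C_j$. I claim that $(v_k, v_{k-1}, \ldots, v_1)$ is a legal transversal sequence whose underlying set is a transversal of ${\mathcal{H}}$. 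Legality is essentially for free: when $v_i$ appears (at position $k-i+1$) the edge $C_i$ witnesses it, because $v_i \in C_i$ and for every $j > i$ the choice of $v_j$ forces $v_j \notin C_i$.

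The main obstacle is showing that $\{v_1, \ldots, v_k\}$ is in fact a transversal of ${\mathcal{H}}$, and this is where the extremality of the starting sequence must be invoked. Suppose for contradiction that some edge $F \in E$ is disjoint from $\{v_1, \ldots, v_k\}$. I would then argue that $(F, C_1, \ldots, C_k)$ is a legal edge covering sequence of length $k+1$: indeed $F$ is trivially legal in the first slot, and for each $j$ the vertex $v_j$ still lies in $C_j \setminus (F \cup \bigcup_{i<j} C_i)$ precisely because $v_j \notin F$, while the underlying set still covers $X$. This contradicts the maximality of $(C_1, \ldots, C_k)$, so no such $F$ exists and the reversed vertex sequence is as claimed, yielding $\tau_{\textrm{gr}}({\mathcal{H}}) \geq k$.

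For the other direction I would run the dual argument: starting from a longest legal transversal sequence $(v_1, \ldots, v_m)$ with private edges $E_1, \ldots, E_m$, the reversed sequence $(E_m, \ldots, E_1)$ is legal by the same bookkeeping, and if some vertex $u$ were missed by $\bigcup_i E_i$ then $(u, v_1, \ldots, v_m)$ would be a strictly longer legal transversal sequence --- any edge through $u$ witnesses $u$, and each $v_j$ retains $E_j$ as a witness because $u \notin E_j$ --- contradicting the maximality. Alternatively, since the incidence graph of ${\mathcal{H}}$ coincides with that of its dual hypergraph ${\mathcal{H}}^*$, this inequality can also be deduced from the first by applying it to ${\mathcal{H}}^*$.
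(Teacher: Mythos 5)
Your argument is correct and complete: the reversal of the sequence of private witnesses gives legality for free, and the extremality of the starting sequence is exactly what forces the underlying set to be a transversal (resp.\ an edge cover), with distinctness of the chosen witnesses also following from the privateness conditions. Note that the paper itself offers no proof of this proposition---it is imported from \cite{bhr2016}---and your reversal-plus-extension argument is essentially the standard proof given in that reference, so there is nothing to compare against beyond saying it matches.
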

\begin{theorem}\cite[Theorem 8.4]{bhr2016}
If ${\mathcal{H}}$ is a hypergraph and $G$ the incidence graph of ${\mathcal{H}}$, then $\ggrt(G)=2\rho_{\textrm{gr}}({\mathcal{H}}).$
\end{theorem}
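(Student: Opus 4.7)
The plan is to combine the splitting/concatenation principle for bipartite total dominating sequences (essentially spelled out in the paragraph preceding the theorem) with Proposition~8.3. More precisely, I aim to establish $\ggrt(G) = \tau_{\textrm{gr}}(\mathcal{H}) + \rho_{\textrm{gr}}(\mathcal{H})$ directly, and then invoke Proposition~8.3 to replace $\tau_{\textrm{gr}}(\mathcal{H})$ by $\rho_{\textrm{gr}}(\mathcal{H})$.

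First I would prove the upper bound $\ggrt(G) \le \tau_{\textrm{gr}}(\mathcal{H}) + \rho_{\textrm{gr}}(\mathcal{H})$. Let $S=(v_1,\dots,v_k)$ be any Grundy total dominating sequence of $G$. Since $G$ is bipartite with parts $\tilde{X}$ and $\tilde{E}$, any vertex $v\in\tilde{X}$ has $N(v)\subseteq\tilde{E}$ and vice versa. Split $S$ into the subsequence $S_X$ of vertices in $\tilde{X}$ and the subsequence $S_E$ of vertices in $\tilde{E}$ (preserving order). For each $v_i\in S_X$, the previous neighborhoods $N(v_j)$ with $v_j\in\tilde{E}$ are disjoint from $N(v_i)\subseteq\tilde{E}$, so the legality condition $N(v_i)\setminus\bigcup_{j<i}N(v_j)\ne\emptyset$ reduces to legality within $S_X$, which translates under the incidence correspondence to the defining condition of a legal transversal sequence of $\mathcal{H}$. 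Moreover, because $\hat{S}$ totally dominates $\tilde{E}$ and only vertices of $S_X$ can contribute, $S_X$ must totally dominate $\tilde{E}$; by the (cited) characterization, $|S_X|\le\tau_{\textrm{gr}}(\mathcal{H})$. The identical argument on the $\tilde{E}$ side yields $|S_E|\le\rho_{\textrm{gr}}(\mathcal{H})$, and hence $k=|S_X|+|S_E|\le\tau_{\textrm{gr}}(\mathcal{H})+\rho_{\textrm{gr}}(\mathcal{H})$.

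Next I would prove the matching lower bound by concatenation. Take a longest legal transversal sequence $T_X$ of $\mathcal{H}$ (of length $\tau_{\textrm{gr}}(\mathcal{H})$) and a longest legal edge covering sequence $T_E$ of $\mathcal{H}$ (of length $\rho_{\textrm{gr}}(\mathcal{H})$), which by the cited characterizations totally dominate $\tilde{E}$ and $\tilde{X}$ respectively in $G$. Form the concatenation $T_X\circ T_E$. For a vertex appearing in $T_X$, its neighbors lie in $\tilde{E}$ and are untouched by previous entries (all from $T_X$ and living in $\tilde{X}$), so legality within $T_X$ implies legality in $T_X\circ T_E$; for a vertex from $T_E$ appearing later, any earlier vertices from $T_X$ contribute no neighbors in $\tilde{X}$, so again the legality reduces to the internal legality of $T_E$. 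Since $\hat{T_X}\cup\hat{T_E}$ totally dominates $\tilde{E}\cup\tilde{X}=V(G)$, the concatenation is a total dominating sequence of $G$ of length $\tau_{\textrm{gr}}(\mathcal{H})+\rho_{\textrm{gr}}(\mathcal{H})$.

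Combining the two bounds yields $\ggrt(G)=\tau_{\textrm{gr}}(\mathcal{H})+\rho_{\textrm{gr}}(\mathcal{H})$, and Proposition~8.3 (which gives $\tau_{\textrm{gr}}(\mathcal{H})=\rho_{\textrm{gr}}(\mathcal{H})$) immediately delivers $\ggrt(G)=2\rho_{\textrm{gr}}(\mathcal{H})$. There is no real obstacle here beyond bookkeeping: the whole point of the section is to set up the bipartite decomposition, and the only nontrivial ingredient, the equality $\tau_{\textrm{gr}}=\rho_{\textrm{gr}}$, is already packaged as Proposition~8.3. The step that deserves the most care in the write-up is verifying that the decomposition and concatenation respect legality; this is where the bipartiteness of $G$ is crucially used, since it is what decouples the two subsequences.
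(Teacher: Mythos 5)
Your proposal is correct and follows exactly the route the paper takes: it cites this theorem from~\cite{bhr2016} and, in the paragraph preceding it, derives it by the same decomposition $\ggrt(G)=\tau_{\textrm{gr}}({\mathcal{H}})+\rho_{\textrm{gr}}({\mathcal{H}})$ (bipartiteness decoupling the two subsequences) combined with Proposition~8.3. No discrepancies to report.
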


Let $G=A \cup B$ be a bipartite graph and ${\mathcal{H}}=(V(G),{\mathcal{N}}(G))$ the open neighborhood hypergraph of $G$. Then ${\mathcal{H}}$ has two connected components ${\mathcal{H}}_1$ and ${\mathcal{H}}_2$ and the incidence graph of ${\mathcal{H}}_i$ is isomorphic to $G$ for every $i \in \{1,2\}$.   

\begin{corollary}\label{col:bipartiteEqual}
Let $G$ be a bipartite graph with bipartition $A \cup B$. Then the Grundy total domination number of $G$ is even and for any Grundy total dominating sequence $S=(v_1,\ldots , v_{2k})$ it follows that $|A \cap \hat{S}|=|B \cap \hat{S}|=k$.
\end{corollary}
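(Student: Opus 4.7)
The plan is to derive the corollary from Theorem~8.4 applied to both connected components of the open neighborhood hypergraph of $G$, together with a short bipartite accounting argument. Let $\mathcal{H} = (V(G), \mathcal{N}(G))$ have components $\mathcal{H}_1$ and $\mathcal{H}_2$; as recalled just above, the incidence graph of either $\mathcal{H}_i$ is isomorphic to $G$. Applying Theorem~8.4 to $\mathcal{H}_1$ (and to $\mathcal{H}_2$) immediately gives $\ggrt(G) = 2\rho_{\mathrm{gr}}(\mathcal{H}_1) = 2\rho_{\mathrm{gr}}(\mathcal{H}_2)$, which handles the evenness assertion and lets us set $k = \rho_{\mathrm{gr}}(\mathcal{H}_1) = \rho_{\mathrm{gr}}(\mathcal{H}_2)$.

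For the balance assertion, I would fix a Grundy total dominating sequence $S = (v_1, \ldots, v_{2k})$ and let $S_A$, $S_B$ denote its subsequences obtained by restriction to $A$ and $B$. Since $G$ is bipartite, an $A$-vertex is only ever dominated by a $B$-vertex, so when tracking the newly dominated $A$-vertices contributed by the entries of $S_B = (b_{i_1}, \ldots, b_{i_r})$, the $A$-entries of $S$ play no role at all. The legality of $S$ therefore specializes to $N(b_{i_j}) \setminus \bigcup_{l<j} N(b_{i_l}) \neq \emptyset$ for every $j$, i.e., $(N(b_{i_1}), \ldots, N(b_{i_r}))$ is a legal hyperedge sequence in $\mathcal{H}_1$; and total domination of $A$ by $\hat{S}$ forces these hyperedges to cover $A$. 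Thus $S_B$ corresponds to an edge covering sequence of $\mathcal{H}_1$, so $|S_B| \leq \rho_{\mathrm{gr}}(\mathcal{H}_1) = k$. By the symmetric argument $|S_A| \leq k$, and since $|S_A| + |S_B| = 2k$ both inequalities must be equalities.

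The only subtlety is the bipartite bookkeeping, namely verifying that the restriction of a legal total dominating sequence in the incidence graph to the hyperedge side is a legal edge covering sequence in the hypergraph. This is essentially the content of the discussion preceding the statement, so I do not anticipate any substantive obstacle to carrying the argument through.
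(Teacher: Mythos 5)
Your proposal is correct and follows essentially the same route as the paper, which states the corollary as an immediate consequence of the preceding discussion: the decomposition of the open neighborhood hypergraph into two components with incidence graph $G$, Theorem~8.4 giving $\ggrt(G)=2\rho_{\textrm{gr}}(\mathcal{H}_i)$, and the correspondence between the $B$-restriction (resp.\ $A$-restriction) of a legal total dominating sequence and a legal edge covering sequence of the component on vertex set $A$ (resp.\ $B$). Your write-up simply makes explicit the bookkeeping the paper leaves implicit, and it does so correctly.
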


\section{Graphs with $\gt(G)=\ggrt(G)=4$}\label{sec:4}

In this section we characterize bipartite graphs with $\gt(G)=\ggrt(G)=4$. First define tow distinct vertices $u$ and $v$ of a graph $G$ to be {\it false twins} if $N(u)=N(v)$. A graph is {\it false twin-free} (also known as \textit{thin}) if it has no false twins. Now
notice that if $G$ is a graph and a vertex is added and connected to the neighborhood of an arbitrary vertex of $G$, then the total domination number and the Grundy total domination number does not change. In other words, the question of characterizing extremal graphs is only interesting for false twin-free graphs.

\begin{theorem}\label{th:regularBipartite4}
Let $G$ be a bipartite false twin-free graph. Then $\gt(G)=\ggrt(G)=4$
if and only if $G$ is isomorphic to the graph $K_{n,n}-M$, $n \geq 2$, where $M$
denotes an arbitrary perfect matching of $K_{n,n}$.
\end{theorem}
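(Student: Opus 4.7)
The plan is to verify directly that $K_{n,n}-M$ satisfies $\gt=\ggrt=4$, and then attack the converse via the open neighborhood hypergraph framework of Section~\ref{sec:prelim}. For the easy direction, in $K_{n,n}-M$ each $a\in A$ misses exactly one vertex of $B$, so any total dominating set needs at least two vertices on each side, giving $\gt\geq 4$; conversely, once two distinct $A$-vertices appear in a legal sequence the $B$-side is already totally dominated, so no third $A$-vertex can be legally appended, and symmetrically on the $B$-side, giving $\ggrt\leq 4$.

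For the hard direction, fix a bipartition $A\cup B$ of $G$ and set $\mathcal{H}_1=(A,\{N(b):b\in B\})$ and $\mathcal{H}_2=(B,\{N(a):a\in A\})$; as explained before Corollary~\ref{col:bipartiteEqual}, the incidence graph of each $\mathcal{H}_i$ is $G$, so Theorem~8.4 of~\cite{bhr2016} gives $\rho_{\textrm{gr}}(\mathcal{H}_i)=2$. Because every total dominating set of $G$ splits as $S_A\cup S_B$ with $S_A$ (resp.\ $S_B$) an edge cover of $\mathcal{H}_2$ (resp.\ $\mathcal{H}_1$), we have $\gt(G)=\rho(\mathcal{H}_1)+\rho(\mathcal{H}_2)=4$; combined with $\rho(\mathcal{H}_i)\leq\rho_{\textrm{gr}}(\mathcal{H}_i)=2$, this forces $\rho(\mathcal{H}_i)=2$ for both $i$, so in particular no $N(a)$ equals $B$ and no $N(b)$ equals $A$. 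The false twin-free hypothesis translates in each $\mathcal{H}_i$ into two conditions: (i) all hyperedges are distinct, and (ii) no two vertices of $\mathcal{H}_i$ lie in exactly the same family of hyperedges.

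The core structural step is to show that every hyperedge of $\mathcal{H}_2$ has size $|B|-1$. Suppose some $E_0$ misses two distinct vertices $x_1,x_2\in B\setminus E_0$; by (ii) some hyperedge separates $x_1$ from $x_2$, so after renaming we may pick $E'\ni x_1$ with $x_2\notin E'$. Since $G$ has no isolated vertices, take any hyperedge $E''\ni x_2$. Then $(E_0,E',E'')$ is a legal length-$3$ sequence because $E'\setminus E_0\ni x_1$ and $E''\setminus(E_0\cup E')\ni x_2$. Extending it to an edge-covering sequence (by appending neighborhoods of still-uncovered vertices one at a time) would contradict $\rho_{\textrm{gr}}(\mathcal{H}_2)=2$. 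Hence each $N(a)$ omits exactly one $b\in B$, and the map $\phi:A\to B$ sending $a$ to its unique non-neighbor is injective by (i). Running the same argument on $\mathcal{H}_1$ produces an injection $\psi:B\to A$, so $|A|=|B|=n$, the maps $\phi$ and $\psi$ are inverse bijections, and the cross non-edges of $G$ form the perfect matching $M=\{a\phi(a):a\in A\}$. Therefore $G\cong K_{n,n}-M$.

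I expect the main technical delicacy to be the length-$3$ legal sequence construction: condition (ii) is what provides the separating hyperedge $E'$, while condition (i) is subsequently used to deduce injectivity of $\phi$. The remainder is routine bookkeeping that ties $\phi$ and $\psi$ together into a perfect matching on $G$.
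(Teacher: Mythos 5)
Your proof is correct and follows essentially the same route as the paper's: the heart of both arguments is that a vertex with two non-neighbors on the opposite side, combined with false-twin-freeness, yields a separating neighbor and hence an over-long legal sequence, forcing every open neighborhood to miss exactly one vertex of the other side. You package this as a length-$3$ legal hyperedge sequence contradicting $\rho_{\textrm{gr}}(\mathcal{H}_2)=2$, whereas the paper builds the non-dominating legal sequence $(x,y,a_1,a_2)$ directly in $G$ and derives $|A|=|B|$ from the edge count $m(n-1)=n(m-1)$ rather than from your pair of inverse injections; these differences are cosmetic.
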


\begin{proof}
First, let $G$ be a graph isomorphic to $K_{n,n}- M$, where $M$ is a perfect matching of $K_{n,n}$ and $n \geq 2$. Let $A, B$ be the bipartition of $G$. We need at least two vertices from $A$ to totally dominate $B$ and at least two vertices from $B$ to totally dominatea $A$. Since any two vertices of $A$ totally dominate $B$ and any two vertices of $B$ totally dominate $A$, $\gt(G)=\ggrt(G)=4$.

For the converse suppose that $\gt(G)=\ggrt(G)=4$.   
Let again $A, B$ be the bipartition of a graph $G$ with $|A|=m$ and $|B|=n$ and let $D$ be a Grundy total dominating sequence such that $\hat{D}$ is a minimum total dominating set. Then it follows from Corollary~\ref{col:bipartiteEqual} that $|A \cap \hat{D}| =|B \cap \hat{D}|=2$.

Denote with $a_1,a_2$ and $b_1,b_2$ the vertices in $A \cap \hat{D}$ and $B \cap \hat{D}$, respectively.
From these conditions it is clear that $m,n \geq 2$. Since $\hat{D}$ is a minimum total
dominating set of $G$, we have $N(a) \neq B$ and $N(b) \neq A$ for every vertex
$a \in A$ and $b \in B$. Hence, $|N(a)| \leq n-1$ and $|N(b)| \leq m-1$ for every
vertex $a \in A$ and $b \in B$. Suppose that there exists a vertex $a \in A$ such
that $|N(a)| \leq n-2$. Then there exist two vertices $x,y \in B$ which are not
adjacent to $a$. By assumption there are no false twins in $G$, and hence without loss of generality
we can assume that $y$ has a neighbor not adjacent to $x$. Thus, $(x,y, a_1,a_2)$ is a legal sequence that does not totally
dominate $G$, a contradiction with $\ggrt(G)=4$. This gives $|N(a)| = n-1$ for every vertex $a \in A$. By symmetry,
we also have $|N(b)| = m-1$ for every vertex $b \in B$. Therefore, the number of
edges in $G$ equals $|E(G)|=m(n-1)=n(m-1)$. From this equation we get $m=n$.

Summing all things up, $G$ must be a bipartite graph on $2n$ vertices,
and every vertex in $G$ has degree $n-1$. Thus, $G$ is isomorphic to the
graph $K_{n,n}-M$, $n \geq 2$, where $M$ can be an arbitrary perfect matching
of $K_{n,n}$. 
\end{proof}

The above theorem motivates the question of the existence of non-bipartite graphs with $\gt(G)=\ggrt(G)=4$. It is easy to construct such disconnected graphs, as $G$ can be a graph with two connected components $G_1,G_2$, where each component induces a graph with $\gt(G_i)=\ggrt(G_i)=2$. Thus those graphs are exactly graphs with two connected components, where each component is a complete multipartite graph~\cite{bhr2016}. As we are focused just on false twin-free graphs, those graphs restrict to graphs with two connected components, where each component is a complete graph. Since those are trivial cases obtained from $\gt(G)=\ggrt(G)=2$, which are not really interesting and since there is no graph $G$ with $\gt(G)=1$, all the remaining graphs with $\gt(G)=\ggrt(G)=4$ are connected. We performed a computer check showing that there are no such graphs on up to 20 vertices. We strongly believe that there are in fact none.

\begin{conjecture}
Let $G$ be a connected false twin-free graph. Then $\gt(G)=\ggrt(G)=4$ if and only if
$G$ is isomorphic to the graph $K_{n,n}-M$, $n \geq 3$, where $M$
denotes an arbitrary perfect matching of $K_{n,n}$.
\end{conjecture}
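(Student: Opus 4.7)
The plan is to build on Theorem~\ref{th:regularBipartite4} by showing that no connected false twin-free graph with $\gt(G)=\ggrt(G)=4$ can contain an odd cycle. Suppose for contradiction that such a $G$ is non-bipartite, and fix a minimum total dominating set $D=\{u_1,u_2,u_3,u_4\}$. Since $D$ is totally dominating, $G[D]$ has no isolated vertex, so $G[D]$ is one of $2K_2$, $P_4$, $C_4$, the paw, $K_4-e$, or $K_4$. The triangle-free options already appear in the bipartite case of Theorem~\ref{th:regularBipartite4}, so the genuinely new instances are the three triangle-containing graphs; if $G[D]$ is bipartite then the odd cycle of $G$ must lie outside $D$, and this case has to be treated separately.

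Next I would extract a uniform local condition from $\ggrt(G)=4$ analogous to the identity $|N(a)|=n-1$ in the bipartite proof. Partitioning $V(G)\setminus D$ into classes according to the intersection $N(v)\cap D$, false twin-freeness bounds the size of each class, and any two vertices $x,y$ in the same class whose complementary neighborhoods differ substantially can be prepended to a suitable reordering of $D$ to produce a legal sequence of length five, contradicting $\ggrt(G)=4$. This should enforce strong regularity within each class and pin down all vertex degrees up to a small additive error. A case analysis on $G[D]$ would then try to locate a triangle inside $D$ or in its immediate neighborhood and combine it with the enforced regularity to exhibit an explicit legal sequence of length five.

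The main obstacle, and the reason the statement is posed only as a conjecture, is the absence of a non-bipartite analogue of Corollary~\ref{col:bipartiteEqual}. In the bipartite proof the balance $|A\cap\hat S|=|B\cap\hat S|=2$ and the double count $m(n-1)=n(m-1)$ combine to pin $G$ down uniquely; in the non-bipartite setting a single vertex can simultaneously dominate and be dominated by structurally incompatible types of vertices, so the analogous count leaves real slack and admits a large zoo of candidate configurations. I expect that any full proof will require either a new counting invariant replacing Corollary~\ref{col:bipartiteEqual} or a genuinely structural argument that extracts a length-five legal sequence from any odd cycle in $G$; the algebraic design-theoretic constraints arising in Section~\ref{sec:regBib6} for the degree-$6$ case hint that the correct invariant may itself have a combinatorial-design flavor. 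The authors' computer verification up to $20$ vertices is consistent with this picture: the configurations are rigid enough to defeat small exhaustive search but not obviously so in general.
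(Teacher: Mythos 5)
This statement is posed in the paper as a conjecture: the authors prove it only for chordal graphs, support it by an exhaustive computer search up to $20$ vertices, and explicitly leave the general case open. Your proposal does not close that gap --- it is a research programme rather than a proof, and you say as much yourself in the final paragraph. Concretely, the argument never establishes the one claim that carries all the weight, namely that a connected false twin-free graph with $\gt(G)=\ggrt(G)=4$ must be bipartite (after which Theorem~\ref{th:regularBipartite4} and connectivity would indeed finish the job, forcing $n\geq 3$ since $K_{2,2}-M\cong 2K_2$ is disconnected). Every step aimed at that claim is left conditional: ``should enforce strong regularity,'' ``would then try to locate a triangle,'' ``I expect that any full proof will require\ldots.''

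Two specific steps are not merely unproved but shaky as stated. First, partitioning $V(G)\setminus D$ by $N(v)\cap D$ and invoking false twin-freeness does not bound the class sizes: two vertices with the same trace on $D$ may still have wildly different neighborhoods outside $D$, so they are not false twins and no bound follows. Second, the claim that two same-class vertices whose ``complementary neighborhoods differ substantially'' can be ``prepended to a suitable reordering of $D$'' to get a legal sequence of length five needs the reordering itself to stay legal and each prepended vertex to totally dominate something new; in the bipartite proof of Theorem~\ref{th:regularBipartite4} this works because Corollary~\ref{col:bipartiteEqual} splits the sequence evenly between the two sides, and you correctly note that no analogue of that corollary is available here --- which is precisely the missing idea. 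In short: your diagnosis of why the problem is hard matches the authors' (they too could only handle the chordal case, via simplicial vertices, and otherwise resorted to computation), but neither you nor the paper proves the statement, and your outline as written would not survive being fleshed out without substantial new input.
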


We continue the section with the proof of the correctness of the conjecture in the class of chordal graphs.
Recall that a graph is {\it chordal} if it contains no induced cycles of length greater than 3. A vertex $v$ of a graph $G$ is called {\it simplicial} if the subgraph of $G$ induced by $N[v]$ is a complete graph. Every chordal graph has at least one simplicial vertex~\cite{Di}.

Let $S=\{v_1,\ldots , v_k\}$ be a total dominating set of $G$. We call the set $N(v_i)\setminus \bigcup_{j\neq i} N(v_j)$ the {\it private neighborhood} of $v_i$.

\begin{theorem}
There is no connected chordal graph $G$ with $\gt(G)=\ggrt(G)=4$. 
\end{theorem}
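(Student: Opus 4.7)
The plan is to suppose, for contradiction, that such a graph $G$ exists and to exhibit a legal sequence of length $5$; since any maximal legal sequence is automatically a total dominating sequence (any uncovered vertex has a neighbour that legally extends the sequence), this would force $\ggrt(G) \ge 5$, contradicting the hypothesis. Fix a minimum total dominating set $D = \{d_1, d_2, d_3, d_4\}$ and, for each $i$, a private neighbour $p_i \in N(d_i) \setminus \bigcup_{j \ne i} N(d_j)$; the $p_i$ are pairwise distinct.

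The simplest way to produce a length-$5$ legal sequence is to prepend a single vertex to $D$: for the sequence $(v, d_1, d_2, d_3, d_4)$ the private $p_i$ witnesses $d_i$ whenever $p_i \notin N(v)$, since $p_i \notin N(d_j)$ for $j \ne i$ by privacy. Hence if some vertex $v$ has no neighbour in $P := \{p_1, p_2, p_3, p_4\}$ the sequence is legal (one automatically has $v \notin D$, as $d_i \sim p_i$), and we are done. I may therefore assume that $P$ is itself a total dominating set; since $|P| = 4 = \gt(G)$, $P$ is a minimum TDS and in particular every $p_i$ has a neighbour in $P$.

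Next I introduce a simplicial vertex $s$ of $G$, which exists by chordality, and apply the following clique-plus-privacy argument: if $p_i \in N(s)$ then $p_i$ lies in the clique $N[s]$ and is therefore adjacent to every other element of $N[s]$; combined with privacy this forces every $d_j \in D \cap N(s)$ with $j \ne i$ to satisfy $p_i = d_j$, which is possible for at most one $j$. Consequently, when $s \notin D$, no $p_i$ lies in $N(s)$ generically, so if $|D \cap N(s)| \ge 2$ (and none of the listed overlaps occurs) the sequence $(s, d_1, d_2, d_3, d_4)$ is legal with $p_i$ as the witness for $d_i$; and if $D \cap N(s) = \{d_1\}$ then $p_i \notin N(s)$ for $i \in \{2, 3, 4\}$ and the sequence $(s, d_2, d_3, d_4, d_1)$ is legal, with $s$ itself serving as the witness for $d_1$ at the last step, because $s \sim d_1$ while $s \notin N(s) \cup N(d_2) \cup N(d_3) \cup N(d_4)$.

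The remaining and most delicate case is $s \in D$. Applied to a private neighbour of $s$, the same clique argument forces $|D \cap N(s)| = 1$, the unique element $d_j$ of $D \cap N(s)$ to be the private $p_s$, and $d_j$ in turn to have $s$ as its only $D$-neighbour; so $G[D] \cong 2K_2$ with components $\{s, d_j\}$ and $\{d_{l_1}, d_{l_2}\}$. I now use the sequence $(p_{l_1}, d_{l_1}, d_{l_2}, d_j, s)$, choosing $p_{l_1}$ outside $D$ when possible. Legality at the first three steps uses privacy and the chordality consequence that $d_{l_1} \sim d_{l_2}$ forbids $p_{l_1} \sim p_{l_2}$ (the would-be induced $4$-cycle $d_{l_1} d_{l_2} p_{l_2} p_{l_1}$ has no chord by privacy); steps $4$ and $5$ use $s$ and $d_j$ themselves as witnesses, because the $2K_2$ structure of $G[D]$ means $s$ and $d_j$ are not adjacent to any of $p_{l_1}, d_{l_1}, d_{l_2}$. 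The main obstacle is the boundary sub-case in which $d_{l_1}$'s only private happens to be $d_{l_2}$, so the sequence above would repeat a vertex; I expect to dispose of it by a short case-analysis that uses connectedness of $G$ to exhibit a non-$D$ vertex in $N(d_j) \setminus N[s]$ or in $N(d_{l_1}) \setminus N[s]$ playing the role of $p_{l_1}$.
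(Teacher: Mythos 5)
Your overall strategy --- prepend one vertex to a minimum total dominating set $D=\{d_1,\dots,d_4\}$ to obtain a legal sequence of length $5$, using private neighbours as witnesses and a simplicial vertex $s$ to control adjacencies --- is different from the paper's (which takes a minimum connected chordal counterexample, deletes a simplicial vertex $x$, and argues about a minimum total dominating set of size $3$ in $G-x$), and several of your individual steps are correct. But the proof as written has genuine gaps, and they sit exactly where connectedness must be used. The configurations you set aside --- the ``overlap'' case $p_a=d_b$, $p_b=d_a$ with $d_a,d_b\in N(s)$ when $s\notin D$, and the boundary sub-case $p_{l_1}=d_{l_2}$ when $s\in D$ --- are precisely the configurations realized by the disconnected chordal graphs $K_m\cup K_n$ ($m,n\ge 2$), which do satisfy $\gt=\ggrt=4$: there every $d_i$'s only private neighbour is its partner in $D$, and every simplicial vertex sees two members of $D$. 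So these cases cannot be treated as degenerate or dismissed ``generically''; any correct proof must extract a contradiction from them using connectedness, and your proposal only gestures at this (``I expect to dispose of it by a short case-analysis'').

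There is also a concrete false step in the sub-case $s\notin D$ with $D\cap N(s)=\{d_1\}$: the clique-plus-privacy argument does not give $p_i\notin N(s)$ for $i\in\{2,3,4\}$, only the implication $p_i\in N(s)\Rightarrow p_i=d_1$, and $p_i=d_1$ is genuinely possible for one index $i$ (namely when $d_1$'s unique neighbour inside $D$ is $d_i$). In that situation the intended witness $p_i=d_1$ for $d_i$ is already dominated by $s$, so the sequence $(s,d_2,d_3,d_4,d_1)$ need not be legal and a different witness must be produced. A smaller, repairable looseness of the same kind occurs in the $s\in D$ case, where $s\not\sim p_{l_1}$ does not follow from the $2K_2$ structure of $G[D]$ alone but needs the simplicial clique argument applied to $p_{l_1}$ and $d_j\in N(s)$. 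Until the two set-aside cases are actually closed, the argument does not establish the theorem.
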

\begin{proof}
Suppose that there exists a connected chordal graph $G'$ with $\gt(G)=\ggrt(G)=4$ and from all the graphs having these properties choose $G$ with the smallest number of vertices. Let $x$ be a simplicial vertex of $G$ and let $H=G\setminus \{x\}$. Since $\gt(H) \leq \ggrt(H)$ and $G$ has the smallest order with $\gt(G) =\ggrt(G) =4$, $\gt(H) \leq 3$. As total dominating set of $H$ together with a vertex from $N(x)$ is a total dominating set of $G$, $\gt(H)=3$. Let $S=\{u_1,u_2,u_3\}$ be a minimum total dominating set of $H$. It is clear that the subgraph of $H$ induced by $S$ is connected.

\begin{claim}\label{c:notN(x)}
Let $S'$ be an arbitrary minimum total dominating set of $H$. Then $S' \cap N(x) = \emptyset$.
\end{claim}
\begin{claimproof}
Let $S'=\{a,b,c\}$ and suppose that $a \in S' \cap N(x)$. Then $S'$ is a total dominating set of $G$, a contradiction.
\end{claimproof}

Suppose first that there exists $i\in \{1,2,3\}$ such that $u_i$ has no private neighbors in $H\setminus N(x)$.
Since $S$ is the smallest total dominating set of $H$, any vertex from $S$ has at least one private neighbor. Therefore all private neighbors of $u_i$ are in $N(x)$. If $N(x)$ is the private neighborhood of $u_i$, then $(N(u_j) \cup N(u_k)) \cap N(x) = \emptyset$, where $\{i,j,k\} = \{1,2,3\}$. Then $(x,u_i,u_j,u_k,x')$, where $x'$ is an arbitrary vertex from $N(x)$, is a legal total dominating sequence of $G$ (since graph induced by $S$ is connected, $u_i$ has a neighbor in $H\setminus N(x)$), which is a contradiction as $\ggrt(G)=4$.  Therefore there exists $x' \in N(x)$ that is not in the private neighborhood of $u_i$. Hence $\{x',u_j,u_k\}$ is total dominating set of $H$, a contradiction with Claim~\ref{c:notN(x)}.

We have proved that $u_i$ has some private neighbors in $H\setminus N(x)$ for any $i \in \{1,2,3\}$. Let $x' \in N(x)$. Then $(x,u_1,u_2,u_3,x')$ is a legal total dominating sequence of $G$ of length 5, the final contradiction. 
\end{proof}

%%%%%%%%%%%%%%%%%%%%%%%%%%%%%%%%%%%%%%%%%%%%%%%%%%%%%%%%%%%%%%%%%%%%%
%%%%%%%%%%%%%%%%%%%%%%%%%%%%%%%%%%%%%%%%%%%%%%%%%%%%%%%%%%%%%%%%%%%%%
\section{Graphs with $\gt(G)=\ggrt(G)=6$}
\label{sec:regBib6}
%%%%%%%%%%%%%%%%%%%%%%%%%%%%%%%%%%%%%%%%%%%%%%%%%%%%%%%%%%%%%%%%%%%%%
%%%%%%%%%%%%%%%%%%%%%%%%%%%%%%%%%%%%%%%%%%%%%%%%%%%%%%%%%%%%%%%%%%%%%

In the previous section we have seen that it is possible to classify the extremal bipartite graphs with $\gt(G)=\ggrt(G)=4$. The purpose of this section is to show that for higher values the situation is much more complicated. In fact, we shall prove that the existence is closely connected with the existence of finite affine planes, one of the oldest and still not solved combinatorial questions.

We begin the section with basic concepts about projective planes, affine planes and Latin squares. For notation and terminology we follow~\cite{stinson}.

A {\it Latin square} of order $n$ with entries from an $n$-set $X$ is an $n \times n$
array $L$ in which every cell contains an element of $X$ such that every row of $L$ is a
permutation of $X$ and every column of $L$ is a permutation of $X$. Let $L_1$ and $L_2$ be Latin squares of order $n$ with entries from $X$ and $Y$, respectively. We say that $L_1$ and $L_2$ are
{\it orthogonal} Latin squares provided that, for every $x \in X$ and for every $y \in Y$,
there is a unique cell $(i,j)$ such that $L_1(i,j) = x$ and $L_2(i,j)=y$. We say that Latin squares $L_1,\ldots , L_s$ of order $n$ are {\it mutually orthogonal}, if $L_i$ and $L_j$ are orthogonal for any $1 \leq i < j \leq s$. A set of mutually orthogonal Latin squares of order $n$ will be denoted by MOLS($n$). It is easy to see that there are at most $n-1$ mutually orthogonal Latins squares of order $n$. If there exist $n-1$ MOLS($n$) $L_1,\ldots , L_{n-1}$ we say that $\{ L_1,\ldots , L_{n-1}\}$ is a perfect orthogonal family of Latin squares.

A {\it{design}} is a pair $(X,{\mathcal{A}})$ such that $X$ is a set of elements called points, and
${\mathcal{A}}$ is a collection of nonempty subsets of $X$ called blocks.
Let $v,k,\lambda$ be positive integers such that $v > k \geq 2$. A $(v,k,\lambda)$-balanced incomplete block design (abbreviated $(v,k,\lambda)$-BIBD) is a
design $(X,{\mathcal{A}})$ such that $|X| = v$, each block contains exactly $k$ points, and every pair of distinct points is contained in exactly $\lambda$ blocks. An $(n^2 + n + 1, n + 1, 1)$-BIBD with $n \geq 2$ is called a projective plane of order $n$. An BIBD design where $|X|=n^2$, the number of blocks equals $n^2 + n$, each block contains $n$ points, each point is contained in exactly $n+1$ blocks and every pair of distinct points is contained in exactly $1$ block is called an {\it affine
plane} of order $n$.

What we need in the proof of our main result is the following.
\begin{definition}\cite[Definition 6.36]{stinson}
Let $s \geq 2$ and $q \geq 1$ be integers. An \emph{orthogonal array} $OA(s, q)$
is an $q^2 \times s$ array $A$, with entries from a set $X$ of cardinality $q$ such that, within
any two columns of $A$, every ordered pair of symbols from $X$ occurs in exactly one
row of $A$.
\end{definition}

$OA(s, q)$ can be seen as a collection of $q^2$ words of length $s$ over an alphabet of $q$ letters, such that each pair of words coincide in at most one place. The following is immediate from the definition of an orthogonal array but we point it out since it will be used latter:

\begin{lemma}\label{columns}
Let $s, q \in \mathbb{N}$ be chosen such that there exist  an orthogonal array $OA(s,q)$ with entries from $\{1, \ldots ,q\}$. Then every column contains exactly $q$ elements $i$ for any $i \in \{1, \ldots, q\}.$ 
\end{lemma}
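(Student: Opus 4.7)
The plan is to argue directly from the definition of an orthogonal array by pairing the chosen column with any other column and using the ordered-pair property. Fix a column $c$ of the array $A$ and fix a symbol $i \in \{1,\ldots,q\}$; I want to count how many rows of $A$ have entry $i$ in column $c$. Since $s \geq 2$, there exists another column $c' \neq c$, and I will consider the restriction of $A$ to the two columns $(c, c')$.

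By the defining property of $OA(s,q)$, for every ordered pair $(i,j) \in \{1,\ldots,q\}^2$ there is exactly one row of $A$ whose entries in columns $c$ and $c'$ are $i$ and $j$, respectively. Fixing the first coordinate to be $i$ and letting $j$ range over $\{1,\ldots,q\}$ yields exactly $q$ distinct rows in which the entry in column $c$ equals $i$. Moreover, every row with $i$ in column $c$ is accounted for in this way, since its value in column $c'$ lies in $\{1,\ldots,q\}$. Hence the symbol $i$ appears in column $c$ exactly $q$ times.

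As a sanity check (which is not strictly required, but I would mention it), summing over $i \in \{1,\ldots,q\}$ gives $q \cdot q = q^2$, which agrees with the total number of rows of $A$. Since the choice of column $c$ and symbol $i$ was arbitrary, the conclusion holds for every column and every symbol.

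There is no real obstacle here: the lemma is a one-step consequence of the definition. The only thing to watch is to invoke the hypothesis $s \geq 2$ to guarantee the existence of the auxiliary column $c'$, since without a second column the pair-based counting argument would not even get off the ground.
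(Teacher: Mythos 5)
Your argument is correct and is exactly the counting the paper has in mind: the paper states the lemma without proof, calling it immediate from the definition, and your pairing of the fixed column with a second column (available since $s\geq 2$ by the definition of an orthogonal array) and letting the second coordinate range over all $q$ symbols is the intended justification. Nothing is missing.
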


Orthogonal arrays are connected with MOLS in the following way:

\begin{theorem}\cite[Theorem 6.38]{stinson}\label{thm:array_to_mols}
Suppose that $s \geq 3$ and $q \geq 1$ are integers. Then $s - 2$ MOLS($q$)
exist if and only if an $OA(s,q)$ exists.
\end{theorem}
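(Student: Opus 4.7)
The plan is to prove both directions of the biconditional by exploiting the natural bijection between the $q^2$ rows of the orthogonal array and the $q^2$ cells of a $q \times q$ grid, so that a row of the array encodes a cell $(i,j)$ together with the values $L_1(i,j), \ldots, L_{s-2}(i,j)$. Throughout, columns $1$ and $2$ of the array play the role of the row/column addresses of a cell and columns $3,\ldots,s$ hold the Latin-square values; this labelling is a choice without loss of generality since permuting columns of an $OA(s,q)$ preserves the OA property.

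For the direction from MOLS to OA, given mutually orthogonal Latin squares $L_1, \ldots, L_{s-2}$ of order $q$, I would define the $q^2 \times s$ array $A$ whose rows are indexed by pairs $(i,j) \in \{1,\ldots,q\}^2$ and whose $(i,j)$-row is $(i, j, L_1(i,j), \ldots, L_{s-2}(i,j))$. The OA condition on a pair of columns then splits into four cases. Columns $(1,2)$ are handled by the bijectivity of the parametrisation; a pair $(1, k+2)$ or $(2, k+2)$ reduces, after fixing the address coordinate, to the fact that each row (respectively column) of $L_k$ is a permutation; and the case $(k+2, l+2)$ with $k \neq l$ is precisely the orthogonality of $L_k$ and $L_l$.

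For the direction from OA to MOLS, starting from an array $A$ realising $OA(s,q)$, the OA property applied to columns $1$ and $2$ tells me that the map $r \mapsto (A(r,1), A(r,2))$ is a bijection from the $q^2$ rows onto $\{1,\ldots,q\}^2$. Define $L_k(i,j) := A(r, k+2)$, where $r$ is the unique row with address $(i,j)$. That each $L_k$ is a Latin square will follow from the OA property on columns $(1, k+2)$, which guarantees that for fixed $i$ the symbol $A(r,k+2)$ takes every value as $j$ varies (so every row of $L_k$ is a permutation), and symmetrically from the OA property on columns $(2, k+2)$; orthogonality of $L_k$ with $L_l$ for $k \neq l$ is then a direct restatement of the OA property on columns $(k+2, l+2)$.

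The main potential obstacle is really just bookkeeping: one must keep straight which coordinate of a row of $A$ plays which role, and match each OA pair-condition with the correct Latin-square axiom. Once that is organised, each verification is essentially a one-line appeal to a definition under the slogan ``address$\leftrightarrow$address gives the bijection, address$\leftrightarrow$entry gives the Latin-square property, entry$\leftrightarrow$entry gives orthogonality.'' The corner case $q=1$ is vacuous on both sides, and the hypothesis $s \geq 3$ is only needed to ensure that at least one Latin square actually appears in the statement.
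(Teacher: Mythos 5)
Your argument is correct and complete; the paper itself gives no proof of this statement, citing it directly from Stinson (Theorem 6.38), and your row-as-cell-plus-values bijection with the four pair-of-columns case analysis is exactly the standard argument found there. Nothing is missing: the address/address, address/entry, and entry/entry correspondences cover all pairs of columns, and the roles of the hypotheses $s \geq 3$ and $q \geq 1$ are accounted for.
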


Notice that the case $OA(q + 1, q)$ is extremal in the sense that $s$ cannot be greater. To see this just consider the first row $x$ of a $OA(s, q)$ and count how many rows have the same latter at fixed place as $x$. By Lemma \ref{columns} there are $q-1$ rows that have the same first latter, $q-1$ rows that have the same second latter, etc. By definition of an orthogonal array all this rows must be different, hence there are $s(q-1)$ of them. Since there are $q^2$ rows, $s(q-1) + 1 \leq q^2$. Hence $s\leq q+1$ and in the extremal case $OA(q + 1, q)$ we must have a collection of $q^2$ words of length $q+1$ over an alphabet of $q$ letters, such that each pair of words coincide in \emph{exactly} one place. 

This extremal case is extremly important. By above, it is equivalent to an existence of $q-1$ MOLS($q$).
Furthermore, MOLS are connected with other classical constructions:

\begin{theorem}\cite[Theorem 6.32]{stinson}\label{thm:equivalence}
Let $q \geq 2$. Then the existence of any one of the following designs
implies the existence of the other two designs:
\begin{enumerate}
\item $q-1$ MOLS($q$).
\item A finite affine plane of order $q$.
\item A projective plane of order $q$.
\end{enumerate}
\end{theorem}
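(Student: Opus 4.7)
The plan is to split the three-way equivalence into two classical biconditionals: (2) $\Leftrightarrow$ (3) via the projective/affine correspondence, and (1) $\Leftrightarrow$ (2) via the orthogonal array reformulation already available from Theorem \ref{thm:array_to_mols}. By transitivity these two suffice, and each can be proved constructively by exhibiting explicit passages between the objects.

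For the equivalence of affine and projective planes I would argue directly from the axioms. Given a projective plane of order $q$, pick an arbitrary line $\ell_\infty$ and delete it together with the $q+1$ points on it. The remaining incidence structure has $q^2+q+1-(q+1)=q^2$ points; each surviving line loses exactly the unique point it shared with $\ell_\infty$ and so retains $q$ points; each point still lies on $q+1$ lines. Declaring two lines parallel precisely when they met on $\ell_\infty$ partitions the $q^2+q$ remaining lines into $q+1$ classes of size $q$, and the affine axioms follow by routine counting. Conversely, given an affine plane of order $q$, adjoin one new point at infinity for each parallel class, extend every line by its class's point, and add a single new line consisting of exactly the new points; verification that this is a $(q^2+q+1,q+1,1)$-BIBD is a direct check.

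For the MOLS/affine direction I would route through orthogonal arrays. Theorem \ref{thm:array_to_mols} with $s=q+1$ reduces the existence of $q-1$ MOLS$(q)$ to the existence of an $OA(q+1,q)$. As observed in the excerpt, such an extremal array is a collection of $q^2$ words of length $q+1$ over an alphabet of size $q$ in which any two words agree in exactly one position. I would interpret the rows as points and the columns as parallel classes: column $c$ partitions the $q^2$ rows into $q$ blocks of $q$ rows each (by Lemma \ref{columns}), indexed by the symbol appearing in that column, and these blocks are the lines of class $c$. Two distinct points lie together in exactly one class, namely the unique column where they agree, so through any two points passes a unique line, yielding an affine plane of order $q$. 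The reverse construction labels the parallel classes by the columns and, for each point, records the label of the line through it in every class; the orthogonal-array axiom follows from the fact that any two points lie on a unique common line.

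The main obstacle is purely bookkeeping: matching the structural axioms across three different combinatorial objects and verifying that the induced incidences really have the asserted multiplicities. All of the numerical conditions reduce to Lemma \ref{columns} and the extremality bound $s(q-1)+1 \le q^2$ already noted before the statement, so nothing unexpected occurs---only a careful dictionary of points, lines, rows, columns, and symbols has to be set up and each compatibility condition checked in turn.
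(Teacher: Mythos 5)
The paper does not prove this statement at all --- it is quoted with a citation to Stinson (Theorem 6.32), so there is no in-paper argument to compare against. Your sketch is the standard proof of that classical result --- deleting/adjoining a line at infinity for the affine--projective correspondence, and routing through the extremal orthogonal array $OA(q+1,q)$ for the MOLS--affine correspondence --- and it is correct in outline; the only gloss worth flagging is in the affine-to-array direction, where the orthogonal-array axiom is really the dual fact that two lines from distinct parallel classes meet in \emph{exactly} one point (at most one because two points determine a unique line, at least one by pigeonhole over the $q$ points of one line and the $q$ lines of the other class), rather than literally the statement that two points lie on a unique common line.
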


Since it is known that for every prime power $q \geq 2$, there exists a projective plane of order $q$~\cite{stinson}, we also know that there exists a perfect orthogonal family of Latin squares of order $q$. Then Theorem~\ref{thm:array_to_mols} implies the existence of orthogonal array $OA(q+1,q)$.

%\begin{lemma}\label{columns}
%Let $k\in \mathbb{N}$ be chosen such that there exist $k-2$ MOLS($k-1$). Then there exists an orthogonal array $OA(k,k-1)$ with entries from $\{1, \ldots ,k-1\}$ in which every column contains exactly $(k-1)$ elements $i$ for any $i \in \{1, \ldots, k-1\}.$ 
%\end{lemma}
%
%\begin{proof}
%Theorem~\ref{thm:array_to_mols} implies the existence of an orthogonal array $L\in OA(k,k-1)$. Let $L_1,\ldots , L_{k-2}$ be mutually orthogonal Latin squares of order $k-1$. In the proof of Theorem~\ref{thm:array_to_mols} $OA(k,k-1)$ $L$ is constructed in the following way. For any $i,j \in \{1, \ldots , k-1\}$ we get one row of $L$ as $(i,j,L_1(i,j),\ldots , L_{k-2}(i,j))$. It is clear that first two columns contains exactly $k-1$ elements $l$ for any $l \in \{1,\ldots , k-1\}$. Since $L_t$ is a Latin square of order $k-1$, it contains exactly $k-1$ elements $l$ (in each row one) for every $t\in \{1,\ldots , k-2\}, l\in \{1,\ldots , k-1\}$.   
%\end{proof}

We are ready for the characterization of regular bipartite graphs with $\gt(G)=\ggrt(G)=6$. We start with a simple lemma studying the neighborhoods of such graphs.

\begin{lemma}\label{l:-1}
Let $G$ be a bipartite false twin-free graph having $\gt(G)=\ggrt(G)=6$. Let $A \cup B$ be a bipartition of a graph $G$. Then $|N(\{a_1,a_2\})|=|B|-1$ for any $a_1\neq a_2 \in A$ and  $|N(\{b_1,b_2\})|=|A|-1$ for any $b_1\neq b_2 \in B$.
\end{lemma}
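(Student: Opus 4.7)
The plan is to prove both equalities by (i) an easy upper bound $|N(\{a_1,a_2\})|\le|B|-1$, and (ii) a lower bound $|N(\{a_1,a_2\})|\ge|B|-1$ that produces a legal sequence of length $7$ violating $\ggrt(G)=6$; the statement for pairs in $B$ then follows by the symmetric argument with $A$ and $B$ exchanged. A fact I would use throughout is that every minimum total dominating set $D$ satisfies $|D\cap A|=|D\cap B|=3$: such a $D$ can be ordered as a legal sequence of length $\gt(G)=6$ (at each step some unused element must newly totally dominate, or else a proper subset would already dominate, contradicting minimality), which is then a total dominating sequence of length $\ggrt(G)$, so Corollary~\ref{col:bipartiteEqual} forces the balanced split. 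The upper bound is immediate from this: if $N(\{a_1,a_2\})=B$, then $\{a_1,a_2\}$ together with the three $B$-vertices of any minimum total dominating set (which totally dominate $A$) is a total dominating set of size $5$, contradicting $\gt(G)=6$.

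For the lower bound, I would suppose $M:=B\setminus N(\{a_1,a_2\})$ contains distinct $x,y$ and derive a contradiction. Using false twin-freeness on the pairs $\{x,y\}$ and $\{a_1,a_2\}$, after possibly relabeling we may assume $N(y)\not\subseteq N(x)$ and $N(a_1)\not\subseteq N(a_2)$. I would then pick $z\in N(a_1)\setminus N(a_2)$ and any $b\in N(a_2)$, and verify that
$$S=(x,y,z,a_2,a_1,b)$$
consists of six distinct vertices and is legal. The three nontrivial legality checks are: $a_1\in N(z)\setminus(N(x)\cup N(y))$ (from $x,y\in M$) at step three; $N(a_1)\setminus N(a_2)\ne\emptyset$ at step five; and $a_2\in N(b)\setminus(N(x)\cup N(y)\cup N(z))$ at step six (using $x,y\in M$ and $z\notin N(a_2)$). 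Distinctness is verified by analogous neighborhood arguments.

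The punchline is that $\hat S$ is \emph{not} a total dominating set of $G$: the vertex $x\in M$ has no neighbor in $\hat S\cap A=\{a_1,a_2\}$ by the definition of $M$, and no neighbor in $\hat S\cap B$ by bipartiteness, so $x$ is undominated by $\hat S$. Appending any $w\in N(x)$ to $S$ therefore produces a legal sequence of length $7$, contradicting $\ggrt(G)=6$. The main obstacle is orchestrating the asymmetric $4$-to-$2$ legal sequence of length $6$: the choice $z\in N(a_1)\setminus N(a_2)$ together with the ordering ``$a_2$ before $a_1$'' is exactly what secures legality while leaving $x$ undominated, and it is this asymmetry that uses the hypothesis $|M|\ge 2$ to force a length-$7$ extension.
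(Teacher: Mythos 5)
Your proof is correct, but the core of your argument---the lower bound---takes a genuinely different route from the paper's. The paper does not build a long legal sequence out of two vertices of $M=B\setminus N(\{a_1,a_2\})$; instead it shows that all vertices of $M$ have the \emph{same} neighborhood. Concretely, letting $A'=\{a\in A: N(a)\subseteq N(\{a_1,a_2\})\}$, it observes that for every $a\in A\setminus A'$ the set $\{a_1,a_2,a\}$ must totally dominate $B$ (otherwise one gets a legal sequence with four vertices of $A$, which by Corollary~\ref{col:bipartiteEqual} would force $\ggrt(G)\geq 8$), whence $N(b)=A\setminus A'$ for every $b\in M$; the vertices of $M$ are then pairwise false twins, so $|M|\leq 1$. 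You instead apply false twin-freeness to the pairs $\{x,y\}$ and $\{a_1,a_2\}$ separately and assemble the explicit legal sequence $(x,y,z,a_2,a_1,b)$ with four $B$-vertices, leaving $x$ undominated so that a seventh term can be appended; your legality and distinctness checks are all sound, and your preliminary fact that a minimum total dominating set can be greedily ordered into a legal sequence (hence splits $3+3$ by Corollary~\ref{col:bipartiteEqual}) is correctly justified. The paper's version buys the structural byproduct $N(b)=A\setminus A'$, which matches the neighborhood analysis carried out later in Theorem~\ref{th:regularBipartite6}; yours is more self-contained and makes the violation of $\ggrt(G)=6$ completely explicit. One cosmetic point: a legal sequence of length $7$ is not by itself a total dominating sequence, so strictly one should add that it extends greedily to one, giving $\ggrt(G)\geq 7$ (indeed $\geq 8$ by parity); the paper elides the same step elsewhere, so this is a matter of wording, not a gap.
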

\begin{proof}
Let $a_1\neq a_2$ be arbitrary vertices from $A$, let $B'=N(\{a_1,a_2\})$ and let $A'$ be the set of all vertices from $A$ whose neighborhoods are contained in $B'$, i.e.\ $N(a') \subseteq B'$ for $a' \in A'$. It follows from Corollary~\ref{col:bipartiteEqual} that any Grundy total dominating sequence contains exactly three vertices $x_1,x_2,x_3$ from $A$ and exactly three vertices $y_1,y_2,y_3$ from $B$. Since $\gt(G)=6$, $\{a_1,a_2,y_1,y_2,y_3\}$ is not a total dominating set of $G$. Therefore $B-B' \neq \emptyset$. Since any Grundy dominating sequence contains three vertices from $A$, the set $\{a_1,a_2,a\}$ totally dominates $B$ for any $a \in A\setminus A'$. Therefore $B-B' \subseteq N(a)$ for any $a \in A \setminus A'$ and hence $N(b)=A\setminus A'$ for any $b \in B\setminus B'$. This implies that all vertices from $B\setminus B'$ have the same open neighborhoods and hence they are falls twins. As $G$ is false twin-free, $|B \setminus B'|=1$. The proof of $|N(\{b_1,b_2\})|=|A|-1$ for any $b_1\neq b_2 \in B$ goes in the same way.
\end{proof}

\begin{remark}
Let $G$ be a regular, bipartite graph with bipartition $A \cup B$. Then $|A|=|B|$.
\end{remark}

\begin{theorem}\label{th:regularBipartite6}
Let $n, k \in \mathbb{N}$. Then an $(n-k)$-regular bipartite false twin-free graph $G$ on $2n$ vertices with $\gt(G)=\ggrt(G)=6$ exists if and only if $n=k^2-k+1$ and there exists a finite projective plane of order $k-1$ (equivalently an affine plane of order $k-1$, or $k-2$ MOLS$(k-1)$, or $OA(k,k-1)$).
\end{theorem}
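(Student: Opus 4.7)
My plan is to transport the question to the non-neighborhood structure of $G$ and recognize the resulting combinatorial design as a projective plane of order $k-1$.

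For the forward direction, assume $G$ has the stated properties and for each $a\in A$ set $\overline{N}(a)=B\setminus N(a)$; by regularity $|\overline{N}(a)|=k$, and Lemma~\ref{l:-1} rewritten gives $|\overline{N}(a_1)\cap\overline{N}(a_2)|=1$ for all distinct $a_1,a_2\in A$, while the false twin-free hypothesis makes the sets $\overline{N}(a)$ pairwise distinct. Double-counting the incidences $(b,\{a_1,a_2\})$ with $b\in\overline{N}(a_1)\cap\overline{N}(a_2)$, summing over pairs yields $\binom{n}{2}$ and summing over points yields $n\binom{k}{2}$ (each $b$ lies in exactly $k$ non-neighborhoods by the symmetric regularity), so $n=k^2-k+1$. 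The family $\{\overline{N}(a)\colon a\in A\}$ is then a symmetric $(k^2-k+1,k,1)$-BIBD, which is equivalent to a projective plane of order $k-1$.

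For the converse, start with a projective plane $\Pi$ of order $q=k-1$ having $n=k^2-k+1$ points and lines, and construct $G$ by letting $A$ be the set of lines, $B$ the set of points, with $a\sim b$ iff $b\notin a$. Regularity, $|A|=|B|=n$, and the false twin-free property are immediate from the axioms. The bound $\gt(G)\leq 6$ follows by choosing three non-concurrent lines and three non-collinear points, since every vertex is then missed by (hence adjacent to) at least one of the three selected on the opposite side. To show $\gt(G)\geq 6$, I would rule out every bipartition split $(i,j)$ of size $5$: in $(4,1)$ and $(1,4)$ splits the lone vertex has degree $n-k<n$ and cannot dominate its opposite side; in $(3,2)$ and $(2,3)$ splits two vertices on one side dominate only $n-1$ of the other side by the symmetric version of Lemma~\ref{l:-1}; extreme splits leave one side untouched.

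The hard part is $\ggrt(G)\leq 6$. The key observation is that in a bipartite graph the legality of adding an $A$-vertex depends only on the previously added $A$-vertices, since any $B$-vertex has its neighborhood contained in $A$. Thus the $A$-subsequence $a_1,a_2,a_3,\dots$ of any legal sequence, viewed as a sequence of lines in $\Pi$, must satisfy the condition that at each step $i$ there exists a point lying on all of $a_1,\dots,a_{i-1}$ but not on $a_i$. For $i=3$ this forces $a_3$ to avoid the unique intersection point of $a_1$ and $a_2$, so $a_1\cap a_2\cap a_3=\emptyset$, and then no point of $\Pi$ can witness the legality of a fourth $A$-vertex. Symmetric reasoning on $B$-vertices bounds the total length of a legal sequence by $6$, so $\gt(G)=\ggrt(G)=6$, completing the equivalence.
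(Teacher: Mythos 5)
Your proof is essentially correct, but it takes a genuinely different and in places cleaner route than the paper. The paper's forward direction is a long structural decomposition: it fixes $a_1,a_2$, splits $A$ into $\{a_1,a_2\}\cup A'\cup A''$ and $B$ into $B_1,\dots,B_{\ell+2}$ plus one extra vertex, computes all the sizes through a chain of claims, and only then reads off an orthogonal array $OA(k,k-1)$ from the non-adjacencies of $A''$. You instead pass immediately to the non-neighborhood hypergraph $\{\overline{N}(a)\colon a\in A\}$: Lemma~\ref{l:-1} gives pairwise intersections of size one, regularity gives block size and replication number $k$, and the double count $\binom{n}{2}=n\binom{k}{2}$ gives $n=k^2-k+1$ in one line. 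This is the conceptually right picture (the paper's $OA(k,k-1)$ is just the coordinatized version of the same plane, with $b$ playing the point at infinity), and your converse construction --- the bipartite non-incidence graph of $\Pi$ --- is likewise the intrinsic form of the paper's array-based construction. Your argument for $\ggrt(G)\le 6$ via the observation that the $A$-subsequence of a legal sequence is itself legal, forcing $a_1\cap a_2\cap a_3=\emptyset$ and blocking a fourth line, is a nice dual rephrasing of the paper's coverage count and is correct.

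Two points deserve tightening. First, in the forward direction the structure you obtain directly is the \emph{dual} one: $n$ blocks of size $k$ on $n$ points, any two \emph{blocks} meeting in exactly one point, each point on exactly $k$ blocks. To call this a $(k^2-k+1,k,1)$-BIBD you still need that any two \emph{points} lie on exactly one common block; this follows by fixing a point $p$ and noting that the $k$ blocks through $p$ pairwise meet only at $p$ and hence cover exactly $1+k(k-1)=n$ points, but the step should be stated (it is also where the degenerate case $k=2$, i.e.\ $3K_2$ versus the ``plane of order $1$,'' hides). Second, in the converse you justify that two vertices on one side dominate exactly $n-1$ vertices of the other side ``by the symmetric version of Lemma~\ref{l:-1}''; as written this is circular, since that lemma assumes $\gt(G)=\ggrt(G)=6$. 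The fact you need is immediate from the plane axioms instead (two lines meet in one point; two points lie on one line), so the argument survives, but the citation should be replaced by that direct observation.
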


\begin{proof}
Suppose that $G$ is $(n-k)$-regular bipartite graph $G$ on $2n$ vertices with $\gt(G)=\ggrt(G)=6$ and bipartiton $A, B$. Let $a_1\neq a_2$ be arbitrary vertices from $A$. Lemma~\ref{l:-1} implies that $|N(\{a_1,a_2\})|=n-1$. Let $B_2=N(a_1)-N(a_2)$, $B_1=N(a_2)-N(a_1)$, and $B'=N(a_1) \cap N(a_2)$. Since $G$ is $(n-k)$-regular and $|N(\{a_1,a_2\})|=n-1$, it follows that $|B'|=n-2k+1$ and $|B_1|=|B_2|=k-1.$ Let $A'$ be the set of all vertices from $A\setminus \{a_1,a_2\}$ whose neighborhoods are contained in $B_1 \cup B_2 \cup B'$ and let $A''=A \setminus (A' \cup \{a_1,a_2\})$. Denote $\ell=|A'|$, which implies that $|A''|=n-\ell-2$ and let $A'=\{a_3,\ldots, a_{\ell+2}\}.$ 

\begin{claim}\label{claim:k-1}
Let $x \in A'$. Then $B_1 \cup B_2 \subseteq N(x)$ and $x$ has exactly $k-1$ non-neighbors in $B'$. 
\end{claim}
\begin{claimproof}
Suppose that there exists $x \in A'$ such that $B_1 \cup B_2 \nsubseteq N(x)$. Without loss of generality we may assume that $x$ is not adjacent to $b_1 \in B_1$. Then $(a_1,x,a_2)$ is a legal dominating sequence (note that $N(a_1) \neq N(x)$, as $G$ is false twin-free) that does not totally dominate whole $B$, a contradiction with the fact that any Grundy total dominating sequence contains exactly three vertices in $A$. Therefore $B_1 \cup B_2 \subseteq N(x)$ for any $x \in A'$. Since $x$ has degree $n-k$, $x$ has $n-3k+2$ neighbors in $B'$. In other words, $x$ is nonadjacent to exactly $k-1$ vertices from $B'$.
\end{claimproof}

Any vertex $a \in A\setminus A''$ has exactly $k-1$ noneighbors in $B_1 \cup B_2 \cup B'$. Denote the non-neighbors of $a_i$ in $B_1 \cup B_2 \cup B'$ by $B_i$ for any $i \in \{3, \ldots ,\ell+2\}$. Note that this extends the definition for $i \in \{1,2\}$. Claim~\ref{claim:k-1} implies that $|B_i|=k-1$ for any $i \in \{1,\ldots , \ell+2\}$.

\begin{claim}\label{c:non-neighbors}
Any vertex $b \in B_1 \cup B_2 \cup B'$ has exactly one non-neighbor in $\{a_1,a_2\} \cup A'.$
\end{claim}
\begin{claimproof}
We already proved that this holds for all $b \in B_1 \cup B_2$, as $a_2$ is the only non-neighbor of $b \in B_2$ and $a_1$ is the only non-neighbor of $b \in B_1$. Since $B'=N(a_1) \cap N(a_2)$ non-neighbors of $b \in B'$ are from $A'$. Suppose first that $b \in B'$ is adjacent to all vertices in $\{a_1,a_2\} \cup A'.$ By Lemma \ref{l:-1}, $B \setminus (B_1 \cup B_2 \cup B')$ consist of one vertex, while all vertices in $A''$ are by definition adjacent to this vertex. Hence three vertices from $A$ together with $b$ and the vertex in $B \setminus (B_1 \cup B_2 \cup B')$ form a total dominating set of $G$, a contradiction. Suppose now that $b \in B'$ is not adjacent to two different vertices $a,a'\in A'.$ Then $(a,a',a_1)$ is a legal dominating sequence in $A$ that does not totally dominates whole $B$, a contradiction. Hence any vertex $b \in B_1 \cup B_2 \cup B'$ has exactly one non-neighbor in $\{a_1,a_2\} \cup A'.$
\end{claimproof}

\begin{claim}\label{c:partition}
$B_3,\ldots , B_{\ell+2}$ is a partition of $B'$.
\end{claim}
\begin{claimproof}
Let $b \in B'$. It follows from Claim~\ref{c:non-neighbors} that there exists $a_i \in A'$ such that $a_ib \notin E(G)$. Therefore $b \in B_i$. Suppose that $x \in B_i \cap B_j$, $i, j \in \{3,\ldots , \ell+2\}, i\neq j$. Then $x \in B'$ has at least two non-neighbors $a_i,a_j$ in $ A'$, which contradicts Claim~\ref{c:non-neighbors}.
\end{claimproof}

Claim~\ref{c:partition} implies the following equation. 
\begin{equation}\label{eq1}
|B'|=\ell(k-1)=n-2k+1.
\end{equation}

\begin{claim}\label{c:neighborsA''}
For any $a''\in A''$ and any $i \in \{1,\ldots , \ell+2\}$, $|N(a'')|\cap B_i=k-2$.
\end{claim}
\begin{claimproof}
 Let $a'' \in A''$. Since any two vertices from $A$ totally dominates $n-1$ vertices in $B$, $|N(\{a'',a_i\})|=n-1$ for any $i \in \{1,\ldots , \ell+2\}.$ This means that $a''$ is adjacent to all except one non-neighbor of $a_i$, i.e.\ $a''$ is adjacent to $k-2$ vertices from $B_i$ for any $i \in \{1,\ldots , \ell+2\}$. 
\end{claimproof}

\begin{claim}\label{c:sizes}
$|A|=n=k^2-k+1, |A'|=\ell=k-2, |A''|=(k-1)^2.$
\end{claim}
\begin{claimproof}
Let $a'' \in A''$. Since $a''$ is adjacent to the vertex from $B \setminus (B_1 \cup B_2 \cup B')$, Claim~\ref{c:neighborsA''} implies  that the degree of $a''$ is $(\ell+2)(k-2)+1$. Since $G$ is $(n-k)$-regular, $(\ell+2)(k-2)+1=n-k$. Combining this equation together with (\ref{eq1}), we obtain $|A|=n=k^2-k+1, |A'|=\ell=k-2$ and $|A''|=(k-1)^2.$
\end{claimproof}

Now we turn to the structure of the graph. It follows from the above, that if there exists a regular bipartite graph $G$ on $2(k^2-k+1)$ vertices with $\gt(G)=\ggrt(G)=6$, then for any $a'' \in A''$ we can choose for any $i \in \{1,\ldots , \ell+2\}$ exactly one vertex in $B_i$ that is not adjacent to $a''$ (Claim 4.4) such that any $b \in B_1 \cup B_2 \cup B'$ will be chosen (as a non-neighbor) $k-1$ times ($b$ has $k$ non-neighbors in $A$, one is from $\{a_1,a_2\} \cup A'$, hence $k-1$ non-neighbors are from $A''$) and each two vertices from $A''$ have exactly one common non-neighbor (Lemma~\ref{l:-1}). For each of the $(k-1)^2$ vertices in $A''$ we have to choose $k$ non-neighbors, one from each $B_i$ and any two vertices from $A''$ have exactly one common non-neighbor. Therefore, we can read from the graph $(k-1)^2$ words of length $k$ from alphabet $\{1,\ldots , k-1\}$, such that every two words coincide in exactly one place. In particular, the existence of the graph implies the existence of an orthogonal array $OA(k, k-1)$ which by Theorem \ref{thm:array_to_mols} is equivalent to $k -2$ MOLS($k-1$) (or finite projective plane of order $k-1$, or affine plane of order $k-1$ by Theorem \ref{thm:equivalence}). 

%
%Lemma~\ref{columns} implies that any vertex $b \in B_1 \cup B_2 \cup B'$ has exactly $k-1$ non-neighbors in $A''$ and one noneighbor in $B_1 \cup B_2 \cup B'$ (a non-neighbor of $b \in B_i$ is $a_i$)  and hence the degree of $b$ is $n-k$, which proves that we did not encroach in the regularity of $G$. 

For the converse we will define a reverse construction of the above. Assume that $|A|=|B|=n=k^2-k+1$ and that there exists an orthogonal array $L\in OA(k,k-1)$ with entries from $\{1, \ldots ,k-1\}$. Then it follows from Lemma~\ref{columns} that  every column contains exactly $(k-1)$ elements $i$ for any $i \in \{1, \ldots, k-1\}.$
We will construct a bipartite regular graph $G$ with $\gt(G)=\ggrt(G)=6$. Let $A=\{a_1,\ldots , a_k, a_1',\ldots a_{(k-1)^2}'\}$. Let $B=B_1 \cup \ldots \cup B_k \cup \{b\}$, where $|B_i|=k-1$ for any $i \in \{1, \ldots , k\}$. Denote vertices of $B_i$ by $B_i=\{b_1^i,\ldots , b_{k-1}^i\}$.
Define edges of $G$ as follows. Connect any $a_i \in \{a_1,\ldots , a_k\}$ to all $b \in \bigcup_{\ell=1}^{i-1}B_{\ell} \cup \bigcup_{\ell=i+1}^k B_{\ell}$. Then connect any $a_i'$ to all $b_j \in B_{s}\setminus \{b_{L(i,s)}^s\}$ and to $b$, for any $s \in \{1,\ldots , k\}, i \in \{1,\ldots , (k-1)^2\}$.  We will first prove that $G$ is $(n-k)$-regular. The vertex $a_i\in \{a_1,\ldots , a_k\}$ is adjacent to all vertices in $B$ except to vertices from $B_i \cup \{b\}$. Hence the degree of $a_i$ is $n-k$. The vertex $a_i'\in \{a_1',\ldots , a_{(k-1)^2}'\}$ is adjacent to $k-2$ vertices from each $B_j$, $j\in \{1,\ldots , k\}$, and to $b$. Therefore the degree of $a_i'$ is $(k-2)k+1=k^2-2k+1=n-k$. The vertex $b$ is adjacent to all vertices from $\{a_1',\ldots , a_{(k-1)^2}'\}$ and consequently it has degree $k^2-2k+1=n-k$. Finally the vertex $b_i^j$, $i\in \{1,\ldots , k-1\}, j \in \{1,\ldots , k\}$, is adjacent to all vertices $a_{\ell}'\in \{a_1',\ldots , a_{(k-1)^2}'\}$ for which $L(\ell,j)\neq i$. Since it follows from Lemma~\ref{columns} that $L(\ell,j)=i$ for exactly $k-1$ indices $\ell \in \{1,\ldots , (k-1)^2\}$, $b_i^j$ has $(k-1)^2-(k-1)$ neighbors in $\{a_1',\ldots , a_{(k-1)^2}'\}$. The vertex $b_i^j$ is also adjacent to all vertices from $\{a_1,\ldots , a_k\}$ except $a_j$. Therefore the degree of $b_i^j$ is $(k-1)^2-(k-1)+(k-1)=k^2-2k+1=n-k$, which proves that $G$ is regular.  

Finally we will prove that $\gt(G)=\ggrt(G)=6$. Let $D$ be a minimum total dominating set of $G$ and let $S$ be a Grundy total dominating sequence of $G$. Note first that any two vertices from $\{a_1, \ldots a_k\}$ totally dominate all vertices from $B$ except $b$. Vertices $a_i\in \{a_1, \ldots , a_k\}$, $a_j' \in \{a_1',\ldots , a_{(k-1)^2}'\}$ totally dominate all vertices from $B$ except one vertex from $B_i$, that is $b_{L(j,i)}^i$. For any $j_1,j_2 \in \{1,\ldots , (k-1)^2\}$ there exists exactly one $i \in \{1,\ldots, k\}$ such that $L(j_1,i)=L(j_2,i)$. Therefore any two vertices $a_{j_1}',a_{j_2}' \in \{a_1',\ldots , a_{(k-1)^2}'\}$ dominates all vertices from $B$ except $b_{L(j_1,i)}^i$ for which $L(j_1,i)=L(j_2,i)$. Therefore any two vertices from $A$ totally dominate all except one vertex from $B$. Hence $|D \cap A|=3$ and $|\hat{S} \cap A|=3$. For any $i \in \{1,\ldots ,k \}$ the vertices $b$ and an arbitrary vertex $b' \in B_i$ totally dominate all vertices from $A$ except $a_i$. Since $b_{j_1}^i$ is adjacent to all vertices from $A$ except $a_i$ and those vertices $a_{ell}'$ from $\{a_1',\ldots , a_{(k-1)^2}'\}$ for which $L(\ell,i)=j_1$, two different vertices $b_{j_1}^i,b_{j_2}^i \in B_i$ totally dominate all vertices from $A$ except $a_i$. Finally let $b_{j_1}^i \in B_i$ and $b_{j_2}^j \in B_j$. Since $L$ is an orthogonal array, $L(\ell,i)=j_1, L(\ell,j)=j_2$ hold for exactly one $\ell \in \{1,\ldots , (k-1)^2\}$. Therefore vertices $b_{j_1}^i,b_{j_2}^j$ totally dominate all vertices from $A$ except $a_{\ell}'$ with $L(\ell,i)=j_1, L(\ell,j)=j_2$. Hence any two vertices from $B$ totally dominate all except one vertex from $A$ and hence $|D \cap B|=|\hat{S} \cap B|=3$. Therefore $\gt(G)=\ggrt(G)=6$.

\end{proof}

Notice that for each $k-1$ being a prime power $p^i$ there exist a construction of projective plane of order $k-1$. Moreover all known constructions have a prime power order.

For example, if $k=3$, there exists, up to isomorphism, precisely one regular, bipartite graph on $2n=14$ vertices with $\gt(G)=\ggrt(G)=6$, corresponding to unique projective plane of order 2. See Figure~\ref{14vertces}. The next example corresponding to a unique projective plane of order 3 has 26 vertices. We have verified by a computer check that up to 26 vertices there are no other bipartite graphs satisfying $\gt(G)=\ggrt(G)=6$, leading to a suspicion that all of them are regular.

\begin{figure}[ht!]
    \begin{center}
        \begin{tikzpicture}[scale=1.0,style=thick,x=1cm,y=1cm]
        \def\vr{2.5pt} % \vr = vertex radius;

		%vertices
        \foreach \x  in {0,1,2,3,4,5,6}
         \foreach \y  in {0,2}
        {
        \filldraw [fill=black, draw=black,thick] (\x,\y) circle (3pt);
        }
        \filldraw [fill=black, draw=black,thick] (2,0) circle (3pt);

        %edges
    \foreach \x  in {0,1,2,3}
    {
    \draw (0,2) -- (\x,0);
    }
		\foreach \x  in {0,1,4,5}
    {
    \draw (1,2) -- (\x,0);
    }
		\foreach \x  in {2,3,4,5}
    {
    \draw (2,2) -- (\x,0);
    }
		\foreach \x  in {1,3,5,6}
    {
    \draw (3,2) -- (\x,0);
    }
		\foreach \x  in {1,2,4,6}
    {
    \draw (4,2) -- (\x,0);
    }
		\foreach \x  in {0,3,4,6}
    {
    \draw (5,2) -- (\x,0);
    }
		\foreach \x  in {0,2,5,6}
    {
    \draw (6,2) -- (\x,0);
    }
    
        \end{tikzpicture}
    \end{center}
    \caption{A regular bipartite graph with $\gt(G)=\ggrt(G)=6.$}
    \label{14vertces}
\end{figure}

\begin{conjecture}
If $G$ is a connected false twin-free graph with $\gt(G)=\ggrt(G)=6$, then $G$ is a  regular bipartite graph.  
\end{conjecture}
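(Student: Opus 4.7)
The plan is to split the conjecture into two reductions: first, that bipartiteness of $G$ is forced, and then that, given bipartiteness, regularity follows from a design-theoretic argument. The second reduction is clean and fits the machinery already in the paper, while the first is the genuine obstacle and likely the reason the statement is left as a conjecture.

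\smallskip

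\noindent\emph{Regularity given bipartiteness.} Let $A\cup B$ be the bipartition of $G$ and, for a vertex $v$ on one side, write $\overline{N}(v)$ for its set of non-neighbors on the opposite side. Lemma~\ref{l:-1} rewrites as $|\overline{N}(a_1)\cap\overline{N}(a_2)|=1$ for distinct $a_1,a_2\in A$, and dually on $B$. I would first dispose of the degenerate cases: if $|\overline{N}(a)|=0$ then Lemma~\ref{l:-1} fails with any other $a'\in A$; if $\overline{N}(a)=\{b\}$ the intersection property forces $b\in\overline{N}(a')$ for every $a'\in A$, making $b$ isolated. So every set $\overline{N}(a)$ has size at least $2$, and these sets are pairwise distinct by false-twin-freeness. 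Applying Lemma~\ref{l:-1} dually in $B$ shows that every two points of $B$ are contained in exactly one block $\overline{N}(a)$, so $(B,\{\overline{N}(a):a\in A\})$ is a linear space. By the De~Bruijn--Erd\H{o}s theorem this linear space is either a projective plane or a near-pencil; the near-pencil would place the vertex corresponding to the long block together with its unique missing point in a pendant $K_2$ component, contradicting connectedness of $G$. Therefore the structure is a projective plane of some order $q$, so $|\overline{N}(a)|=q+1$ is constant over $A$, and symmetrically over $B$; combined with $|A|=|B|$ (automatic from the projective plane) this yields $G$ regular, landing us exactly in the class characterized by Theorem~\ref{th:regularBipartite6}.

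\smallskip

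\noindent\emph{Bipartiteness --- the main obstacle.} Suppose for contradiction that $G$ contains an odd cycle, and fix a shortest such cycle $C$, which is then induced. The guiding intuition is Corollary~\ref{col:bipartiteEqual}: in a bipartite graph every Grundy total dominating sequence alternates rigidly between the two parts, and the ``odd excess'' provided by $C$ ought to produce extra legal insertions beyond length $6$. Concretely, starting from a minimum total dominating sequence $(v_1,\ldots,v_6)$, one would try to locate a vertex $c\in V(C)$ carrying a neighbor that is not totally dominated by any proper prefix of some re-ordering of the sequence, and then prepend or insert $c$ to produce a legal sequence of length $7$, contradicting $\ggrt(G)=6$. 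Alternatively one can attempt to find a total dominating set of size $5$ by folding a portion of $C$ into an existing dominating configuration, contradicting $\gt(G)=6$. The technical difficulty is that $C$ may be embedded very densely in $G$: its vertices may share almost all their neighbors with $\hat{D}=\{v_1,\ldots,v_6\}$, so the attempted extension collapses because no private neighbor survives. A systematic case analysis based on how $N(V(C))$ interacts with $\hat{D}$ --- plus possibly a use of the sandwich $\gt(G)\leq\gamma_{tg}(G)\leq\ggrt(G)$, which forces the game total domination number to also equal $6$ and imposes additional symmetry constraints --- appears necessary, and this is where substantial new ideas will likely be required.
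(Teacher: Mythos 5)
This statement is one the paper itself leaves as an open conjecture, so there is no proof of record to compare against; judged on its own terms, your proposal does not close it. The genuine gap is exactly where you flag it: the reduction to the bipartite case. Everything after ``suppose $G$ contains an odd cycle'' is a description of strategies one might try --- locate a vertex of the cycle with a surviving private neighbor, insert it to get a legal sequence of length $7$, or fold the cycle into a dominating set of size $5$ --- but no such vertex is ever exhibited and no case analysis is carried out; you yourself observe that the attempted extension can collapse when the cycle is densely embedded. Note moreover that the only structural tools the paper provides, Corollary~\ref{col:bipartiteEqual} and Lemma~\ref{l:-1}, are both proved under a bipartiteness hypothesis, so none of that machinery is available in the non-bipartite setting; that half of the conjecture is essentially untouched by your argument.

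The first half, by contrast, is correct and is worth isolating as a result in its own right. For a connected false twin-free bipartite $G$ with $\gt(G)=\ggrt(G)=6$, Lemma~\ref{l:-1} read from the $B$ side says that every pair of vertices of $B$ has exactly one common non-neighbor in $A$, i.e.\ lies in exactly one block $\overline{N}(a)$; your degenerate cases correctly force every block to have size at least $2$ (an empty block contradicts the lemma applied to that vertex and any other, a singleton block forces an isolated vertex of $B$), and false-twin-freeness makes the blocks pairwise distinct, so $(B,\{\overline{N}(a):a\in A\})$ is a linear space. De~Bruijn--Erd\H{o}s applied to it and to its dual on $A$ gives $|A|\ge|B|$ and $|B|\ge|A|$, and the equality case leaves a near-pencil --- excluded by connectedness via the pendant $K_2$ on the long block's vertex and its unique neighbor --- or a projective plane of some order $q$, whence every vertex of $A$ has exactly $q+1$ non-neighbors, dually for $B$, and $|A|=|B|=q^2+q+1$ forces $G$ to be $q^2$-regular. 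This genuinely strengthens the paper: Theorem~\ref{th:regularBipartite6} takes regularity as a hypothesis, and the authors only report a computational ``suspicion'' that bipartite examples are regular, whereas your argument proves it for connected false twin-free graphs. The net effect is a clean reduction of the conjecture to the single statement that such a $G$ must be bipartite --- which remains entirely open.
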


%%%%%%%%%%%%%%%%%%%%%%%%%%%%%%%%%%%%%%%%%%%%%%%%%%%%%%%%%%%%%%%%%%%%%
%%%%%%%%%%%%%%%%%%%%%%%%%%%%%%%%%%%%%%%%%%%%%%%%%%%%%%%%%%%%%%%%%%%%%
\section*{Acknowledgements}
The authors are grateful to Zsolt Tuza for several useful comments. The authors also acknowledges the financial support from the Slovenian Research Agency (research core funding No.\ P1--0297 and research project No.\  J1--9109).

%%%%%%%%%%%%%%%%%%%%%%%%%%%%%%%%%%%%%%%%%%%%%%%%%%%%%%%%%%%%%%%%%%%%%
%%%%%%%%%%%%%%%%%%%%%%%%%%%%%%%%%%%%%%%%%%%%%%%%%%%%%%%%%%%%%%%%%%%%%
%\nocite{*}

%%%%%%%%%%%%%%%%%%%%%%%%%%%%%%%%%%%%%%%%%%%%%%%%%%%%%%%%%%%%%%%%%%

\end{document}